\newtheorem{theorem}{Theorem}
\newtheorem{lemma}[theorem]{Lemma}
\newtheorem{proposition}[theorem]{Proposition}
\newtheorem{thm}{Theorem}
\newtheorem{conj}{Conjecture}
\newenvironment{thmbis}[1]
 {%
 \addtocounter{thm}{-1}%
 \begin{thm}}
 {\end{thm}}
\newenvironment{thmbiss}[1]
 {%
 \addtocounter{thm}{-1}%
 \begin{thm}}
 {\end{thm}}
\numberwithin{equation}{section}
\newcommand{\ball}{\mathbb{B}^n}
\newcommand{\sphere}{\mathbb{S}^{n-1}}
\newcommand{\bfell}{\boldsymbol{\ell}_{\alpha}}
\newcommand{\bbR}{\mathbb{R}}
\newcommand{\bfe}{\boldsymbol{e}}
\newcommand{\bfn}{\boldsymbol{n}}
\title{A proof of the generalized Khavinson conjecture}
\thanks{This work was supported by the National Natural Science
Foundation of China grants 11571333 and 11971453.}
\author{Congwen Liu}
\email{cwliu@ustc.edu.cn}
\address{School of Mathematical Sciences,
University of Science and Technology of China,\\
Hefei, Anhui 230026,
People's Republic of China\\
and\\
Wu Wen-Tsun Key Laboratory of Mathematics\\
USTC, Chinese Academy of Sciences}
\subjclass[2010]{31B05}
\keywords{Bounded harmonic functions, The generalized Khavinson conjecture, Gegenbauer polynomials}
\begin{document}

\begin{abstract}
We give a complete proof of the generalized Khavinson conjecture which states that, for bounded harmonic functions
on the unit ball of $\bbR^n$, the sharp constants in the estimates for their radial derivatives and for their gradients
coincide.
\end{abstract}

\maketitle

\section{Introduction}

For a fixed positive integer $n\geq 3$, let $\ball$ be the open unit ball in $\bbR^n$
and $\sphere:=\partial \ball$.
Let $h^{\infty}$ be the space of bounded harmonic functions on $\ball$.
For fixed $x\in \ball$ let $C(x)$ denote the smallest number such that the estimate
\[
\left| \nabla u(x)\right| \leq C(x) \sup_{y\in \ball} \left|u(y)\right|
\]
holds for all $u\in h^{\infty}$. Similarly, for $x\in \ball$ and $\boldsymbol{\ell} \in \mathbb{S}^{n-1}$,
denote by $C(x,\boldsymbol{\ell})$ 
the smallest number such that the inequality
\begin{equation}
\left|\langle \nabla u(x), \boldsymbol{\ell} \rangle\right| \leq C(x,\boldsymbol{\ell}) \sup_{y\in \ball}|u(y)|
\end{equation}
holds for all $u\in h^{\infty}$. As is easily shown (see \cite[Chapter 6]{KM12}), for any $x\in \ball$, both $C(x)$
and $C(x,\boldsymbol{\ell})$ are finite. Also, since
\[
\left| \nabla u(x)\right| = \sup_{\boldsymbol{\ell}\in \sphere} \left| \langle \nabla u(x), \boldsymbol{\ell}\rangle\right|,
\]
we clearly have
\begin{equation}\label{eqn:varproblem}
C(x)= \sup_{\boldsymbol{\ell}\in \sphere} C(x,\boldsymbol{\ell}).
\end{equation}

The generalized Khavinson conjecture states:

\begin{conj}\label{conj}
 For $x\in \ball\setminus \{0\}$ we have
 \[
 C(x)=C(x,\bfn_{x}),
\]
where $\bfn_x:=x/|x|$ is the unit outward normal vector to the sphere $|x|\sphere$ at $x$.
\end{conj}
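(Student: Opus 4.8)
The plan is to convert Conjecture~\ref{conj} into a sharp $L^1$ extremal problem for the Poisson kernel and then to settle that problem by slicing the sphere and invoking Gegenbauer expansions. Write $P(x,\zeta)=(1-|x|^2)/|x-\zeta|^n$ for the Poisson kernel of $\ball$. Every $u\in h^{\infty}$ is the Poisson integral of some $f\in L^{\infty}(\sphere)$ with $\sup_{\ball}|u|=\|f\|_{L^{\infty}}$, so differentiating under the integral and using $L^{\infty}$--$L^1$ duality (the extremal datum being $f=\operatorname{sgn}\langle\nabla_x P(x,\cdot),\boldsymbol{\ell}\rangle$) yields
\[
C(x,\boldsymbol{\ell})=\int_{\sphere}\bigl|\langle\nabla_x P(x,\zeta),\boldsymbol{\ell}\rangle\bigr|\,d\sigma(\zeta).
\]
In view of \eqref{eqn:varproblem}, Conjecture~\ref{conj} is equivalent to the statement that the map $\boldsymbol{\ell}\mapsto C(x,\boldsymbol{\ell})$ on $\sphere$ attains its maximum at $\boldsymbol{\ell}=\bfn_x$.

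First I would normalize by rotational invariance, taking $x=r\bfe_n$ with $r=|x|\in(0,1)$, so that $\bfn_x=\bfe_n$. Since the configuration is invariant under the subgroup $O(n-1)$ fixing $\bfe_n$, the constant $C(x,\boldsymbol{\ell})$ depends only on the angle $\gamma$ between $\boldsymbol{\ell}$ and $\bfe_n$; accordingly I set $\boldsymbol{\ell}=\cos\gamma\,\bfe_n+\sin\gamma\,\bfe_1$ and $\psi(\gamma):=C(x,\boldsymbol{\ell})$. A direct computation gives, with $t:=\zeta_n$ and $\rho:=(1+r^2-2rt)^{1/2}$,
\[
\nabla_x P(x,\zeta)=A(t)\,\bfe_n+D(t)\,(\zeta-t\bfe_n),
\]
where $A(t)=\rho^{-(n+2)}\bigl[(n+(4-n)r^2)\,t-r(n+2+(2-n)r^2)\bigr]$ and $D(t)=n(1-r^2)\,\rho^{-(n+2)}>0$, whence
\[
\psi(\gamma)=\int_{\sphere}\bigl|\cos\gamma\,A(t)+\sin\gamma\,D(t)\,\zeta_1\bigr|\,d\sigma(\zeta).
\]
The substitutions $\zeta_1\mapsto-\zeta_1$ and $\gamma\mapsto-\gamma$ show that $\psi$ is even and symmetric about $\gamma=\pi/2$, so the whole conjecture reduces to the single inequality $\psi(\gamma)\le\psi(0)$ for $\gamma\in[0,\pi/2]$.

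To compute $\psi$ I would slice $\sphere$ by the height $t=\zeta_n$: integrating first over $\sqrt{1-t^2}\,\mathbb{S}^{n-2}$ and writing $\zeta_1=\sqrt{1-t^2}\,s$ collapses the angular integral to the explicit one--variable quantity
\[
\int_{-1}^{1}\bigl|\cos\gamma\,A(t)+\sin\gamma\,D(t)\sqrt{1-t^2}\,s\bigr|\,(1-s^2)^{(n-4)/2}\,ds .
\]
Because $A$ and $D$ are polynomials times $\rho^{-(n+2)}$, I would expand the kernels through the Gegenbauer generating series $(1-2rt+r^2)^{-\lambda}=\sum_{k\ge0}C_k^{\lambda}(t)\,r^k$; the orthogonality of the $C_k^{\lambda}$ against the slicing weight $(1-t^2)^{(n-3)/2}$ then diagonalizes the $t$--integration and produces closed forms for $\psi(0)$ and, on the sign--constant part of the domain, for $\psi(\gamma)$.

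The hard part will be the inequality $\psi(\gamma)\le\psi(0)$. On the cap $\{t:|\cos\gamma\,A(t)|\ge\sin\gamma\,D(t)\sqrt{1-t^2}\}$ the inner integrand keeps one sign and contributes exactly $|\cos\gamma|\,|A(t)|$ times the slice weight; on the complementary band the absolute value forces a strictly larger contribution, so the naive pointwise estimate only gives $\psi(\gamma)\ge\cos\gamma\,\psi(0)$, which points the wrong way. The crux is therefore to prove that the surplus generated on the sign--change band is dominated by the deficit $(1-\cos\gamma)\psi(0)$ coming from the factor $\cos\gamma$, for every $\gamma\in(0,\pi/2]$ and every $r\in(0,1)$. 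I expect to reduce this, via the Gegenbauer expansions above, to a one--parameter inequality between Gegenbauer/hypergeometric integrals and to prove the requisite monotonicity in $\gamma$; essentially all of the analytic difficulty of the theorem is concentrated in this special--function estimate.
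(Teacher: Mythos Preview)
Your reduction is sound: the Poisson--kernel duality, the rotational normalization, the slicing in $t=\zeta_n$, and the symmetry $\psi(\gamma)=\psi(\pi-\gamma)$ are all correct, and the explicit forms of $A(t)$ and $D(t)$ check out. But the proposal stops precisely where the theorem begins. You observe that the naive cap/band decomposition yields only $\psi(\gamma)\ge\cos\gamma\,\psi(0)$, which is useless, and then write ``I expect to reduce this \dots\ and prove the requisite monotonicity in~$\gamma$'' without saying how. That is the entire content of the conjecture; a proposal that ends there has no mechanism to finish.

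The paper's route avoids the sign-change band entirely by proving a stronger statement: it shows that $t\mapsto C(\rho\bfe_1,\boldsymbol{\ell}_{\arccos t})$ is \emph{convex} on $[-1,1]$, so the maximum must occur at an endpoint, hence at $\alpha=0$ by your own symmetry. The convexity is established by three ingredients you do not have. First, the inner $s$-integral over the slice is summed explicitly via Gegenbauer's addition theorem \eqref{eqn:addthm}, yielding a closed series $\sum_k \frac{k!}{(n-2)_k}C_k^{(n-2)/2}(x)C_k^{(n-2)/2}(\cos\alpha)\rho^k$ before any absolute value is taken; combined with Lemma~\ref{lem:intbyterms} this produces the clean representation of Proposition~\ref{prop:keyreprn}. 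Second, the second derivative in $t=\cos\alpha$ is computed term by term, and each of the three resulting Gegenbauer series is resummed through the product formula of Lemma~\ref{lem:Gegenbauer1874} into an integral against a \emph{nonnegative} kernel $K_\lambda$ and then collapsed via the generating relation \eqref{eqn:generatingformula}. Third, the resulting integrand is, up to positive factors, a quadratic $a^2-\frac{2n}{n-2}ab+\frac{n^2}{(n-2)^2}b^2=(a-\tfrac{n}{n-2}b)^2$, which is manifestly nonnegative. None of these steps---the convexity reformulation, the product-formula resummation, or the algebraic completion of the square---appears in your plan, and without at least the first of them your band-estimate strategy has no visible path forward.
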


This conjecture was formulated by G. Kresin and V. Maz'ya in \cite{KM10a}. It actually dates back to 1992.
D. Khavinson \cite{Kha92} obtained a sharp pointwise estimate for the radial derivative of bounded harmonic functions
on the unit ball of $\bbR^3$. In a private conversation with K. Gresin and V. Maz'ya, he conjectured that
the same estimate holds for the norm of the gradient of bounded harmonic functions.
Estimates of such type are of use in problems relating electrostatics as well as hydrodynamics of ideal fluid,
elasticity and hydrodynamics of the viscous incompressible fluid (see, for instance, the books by Protter
and Weinberger \cite{PW84}, G. Kresin and V. Maz'ya \cite{KM12}).

In 2010, G. Kresin and V. Maz'ya \cite{KM10b} proved the half-space analogue of the above conjecture.
However, it turned out that the original conjecture is very difficult.

In 2017, D. Kalaj \cite{Kal17} showed that the conjecture is true for $n=4$. Very recently,
P. Melentijevi\'c \cite{Mel19} confirmed the conjecture in $\bbR^3$.

For $n\geq 5$, only partial results are available. See \cite[Chapter 6]{KM12} for solutions of various
Khavinson-type extremal problems for harmonic functions on the unit ball and on a half-space in $\bbR^n$.
Recently,  M. Markovi\'c \cite{Mar17} proved the conjecture when $x$ is near the boundary of the unit ball.

The aim of this note is to prove the following.

\begin{theorem}\label{thm:main}
The generalized Khavinson conjecture is correct.
\end{theorem}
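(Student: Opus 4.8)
The plan is to recast the two sharp constants as $L^1$-norms of an explicit kernel and then reduce the conjecture to a one-parameter monotonicity statement. First I would use the fact that every $u\in h^{\infty}$ is the Poisson integral $u(x)=\int_{\sphere}P(x,\zeta)f(\zeta)\,d\sigma(\zeta)$ of a boundary function $f\in L^{\infty}(\sphere)$ with $\sup_{\ball}|u|=\|f\|_{L^{\infty}}$, where $P(x,\zeta)=(1-|x|^2)/|x-\zeta|^{n}$ and $d\sigma$ is surface measure. Differentiating under the integral sign and testing against $f=\operatorname{sgn}\langle\nabla_x P(x,\zeta),\boldsymbol\ell\rangle$ gives the dual formula
\[
C(x,\boldsymbol\ell)=\int_{\sphere}\bigl|\langle\nabla_x P(x,\zeta),\boldsymbol\ell\rangle\bigr|\,d\sigma(\zeta).
\]
By \eqref{eqn:varproblem} the conjecture is equivalent to the assertion that $\boldsymbol\ell\mapsto C(x,\boldsymbol\ell)$ attains its maximum over $\sphere$ at $\boldsymbol\ell=\bfn_x$. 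Since $C(x,\cdot)$ is invariant under rotations fixing $x$, I may take $x=r\bfe_n$ with $r=|x|$ and $\boldsymbol\ell=\boldsymbol\ell_\theta:=\cos\theta\,\bfe_n+\sin\theta\,\bfe_1$; writing $g(\theta):=C(x,\boldsymbol\ell_\theta)$, the symmetry $\zeta_1\mapsto-\zeta_1$ shows $g$ is even, and the goal becomes $g(\theta)\le g(0)$ for $0\le\theta\le\tfrac{\pi}{2}$.

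A direct computation of $\nabla_x P$ reveals a crucial simplification: both the radial component $\langle\nabla_x P,\bfe_n\rangle$ and the tangential component $\langle\nabla_x P,\bfe_1\rangle$ share the common denominator $|x-\zeta|^{n+2}$, with affine-in-$\zeta$ numerators. Concretely $\langle\nabla_x P(x,\zeta),\boldsymbol\ell_\theta\rangle=\bigl(A_\theta+\langle c_\theta,\zeta\rangle\bigr)\big/|x-\zeta|^{n+2}$, where $A_\theta=\alpha\cos\theta$, $c_\theta=\beta\cos\theta\,\bfe_n+n(1-r^2)\sin\theta\,\bfe_1$, and $\alpha=-r[(n+2)+(2-n)r^2]$, $\beta=n+(4-n)r^2$. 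Introducing the positive measure $d\mu(\zeta):=|x-\zeta|^{-(n+2)}\,d\sigma(\zeta)$ — rotationally symmetric about the $\bfe_n$-axis and concentrated near $\bfe_n$ — the problem reduces to the single inequality
\[
\int_{\sphere}\bigl|A_\theta+\langle c_\theta,\zeta\rangle\bigr|\,d\mu(\zeta)\ \le\ \int_{\sphere}\bigl|\alpha+\beta\zeta_n\bigr|\,d\mu(\zeta).
\]
Since $\beta-n(1-r^2)=4r^2>0$, both the linear coefficient $c_\theta$ and the constant $A_\theta$ shrink in magnitude as $\theta$ grows while $c_\theta$ tilts toward $\bfe_1$; the content of the inequality is that this shrinkage outweighs the tilt.

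To prove this I would perform the azimuthal integration over the $\mathbb{S}^{n-2}$-fibres, reducing each side to a one-dimensional integral in $t=\zeta_n$ against the weight $(1-t^2)^{(n-3)/2}|x-\zeta|^{-(n+2)}$ and an inner average $\int_{-1}^{1}|a+bs|(1-s^2)^{(n-4)/2}\,ds$ of the absolute value. The moments $\int_{\sphere}\zeta_n^{k}\,d\mu$ and these fibre averages are exactly the quantities produced by expanding $|x-\zeta|^{-(n+2)}$ in Gegenbauer polynomials $C_k^{\lambda}(\zeta_n)$, which is how that family enters. A first reality check is the local statement $g''(0)\le0$: differentiating twice and using that the numerator $\alpha+\beta\zeta_n$ vanishes precisely on the subsphere $\{\zeta_n=-\alpha/\beta\}$, one finds
\[
g''(0)=-g(0)+2n^2(1-r^2)^2\int_{\sphere}\zeta_1^2\,\delta(\alpha+\beta\zeta_n)\,d\mu(\zeta),
\]
so that $g''(0)\le0$ is itself a concrete comparison between a surface integral over the sign-change subsphere and the full integral $g(0)$, amenable to the Gegenbauer expansion above. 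The tangential endpoint $g(\pi/2)\le g(0)$ is a second, structurally similar special case.

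The main obstacle is upgrading these infinitesimal and endpoint comparisons to the global bound $g(\theta)\le g(0)$ for all $\theta\in(0,\tfrac{\pi}{2})$. The difficulty is intrinsic and non-convex: where $\langle\nabla_x P,\bfe_n\rangle$ is bounded away from $0$ the oblique derivative does not change sign on the fibre and its contribution is exactly $\cos\theta$ times the radial one — a strict decrease — but on the fibres meeting the subsphere $\{\zeta_n=-\alpha/\beta\}$ the tangential term forces a sign change and produces a positive excess over $\cos\theta$ times the radial contribution. One must therefore show that the bulk decrease dominates this localized excess near the sign-change subsphere for every $\theta$. I would attack this by proving that $g$ is nonincreasing on $[0,\tfrac{\pi}{2}]$ through a sign analysis of $g'(\theta)$, which after the azimuthal integration becomes an integral whose sign is governed by monotonicity and positivity properties of the Gegenbauer coefficients of $|x-\zeta|^{-(n+2)}$; securing this uniform control over the sign-change locus is the genuinely hard part of the argument.
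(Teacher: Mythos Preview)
Your setup is correct and largely parallels the paper's: the dual formula $C(x,\boldsymbol\ell)=\int_{\sphere}|\langle\nabla_x P(x,\zeta),\boldsymbol\ell\rangle|\,d\sigma(\zeta)$, the reduction by rotational symmetry to a one-parameter family $\boldsymbol\ell_\theta$, and the entry of Gegenbauer polynomials via the expansion of negative powers of $|x-\zeta|$ are exactly the ingredients the paper assembles (following Markovi\'c and Melentijevi\'c). But your proposal stops short of a proof. After checking $g''(0)\le0$ and the endpoint $g(\pi/2)\le g(0)$ you acknowledge that the global bound is ``the genuinely hard part'' and offer only a strategy---a sign analysis of $g'(\theta)$ governed by unspecified positivity properties of Gegenbauer coefficients---without carrying it out. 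This is precisely the step that had resisted attack for general $n$, and a heuristic appeal to positivity does not close the gap.

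The decisive idea you are missing is to change both the variable and the target. Rather than proving $g$ is monotone in $\theta$, the paper proves that $t\mapsto C(\rho\bfe_1,\boldsymbol\ell_{\arccos t})$ is \emph{convex} on $[-1,1]$; combined with the symmetry $C(\rho\bfe_1,\boldsymbol\ell_0)=C(\rho\bfe_1,\boldsymbol\ell_\pi)$ this forces the maximum to the endpoint $t=1$ at once. (Your statement that ``the difficulty is intrinsic and non-convex'' is thus exactly backwards.) The second derivative in $t$---not in $\theta$---organizes into three series of the shape $\sum_k a_k\,C_k^{\lambda}(\delta t)\,C_k^{\lambda}(t)\,\rho^k$ for $\lambda=\tfrac{n-2}{2},\tfrac{n}{2},\tfrac{n+2}{2}$. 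Gegenbauer's product formula rewrites each product $C_k^{\lambda}(\delta t)C_k^{\lambda}(t)$ as $\int C_k^{\lambda}(z)\,K_\lambda(\delta t,t,z)\,dz$ against a \emph{nonnegative} kernel, after which the generating relation resums all three series in closed form. The resulting integrand is, up to a positive factor, the perfect square
\[
\Bigl[(1-2\rho z+\rho^2)(1-t^2)-\tfrac{n}{n-2}\bigl(1-\delta^2t^2-t^2-z^2+2\delta t^2 z\bigr)\Bigr]^2,
\]
which makes the nonnegativity of the second derivative manifest and eliminates any delicate sign-tracking near the sign-change locus. Your proposed route through $g'(\theta)$ never reaches this structure; the azimuthal-integration you sketch would at best recover Melentijevi\'c's integral representation, which is the starting point of the paper's argument, not its resolution.
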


Just like that in \cite{Kal17}, \cite{Mar17} and \cite{Mel19}, our proof is
based on an observation of M. Markovi\'c in \cite{Mar17} that the generalized Khavinson conjecture is equivalent
to the statement that the optimization problem
\begin{equation}\label{eqn:extremal}
\sup_{\alpha} C(\rho \bfe_1, \bfell)
\end{equation}
has a solution at $\alpha=0$, where
\[
\bfell := \bfe_1 \cos \alpha + \bfe_2 \sin \alpha, \quad \alpha \in [0,\pi],
\]
with $\bfe_1$ and $\bfe_2$ the first two basis vectors in $\bbR^n$.
However, to solve this optimization problem, we find a new representation of $C(\rho \bfe_1,\bfell)$ in terms of the Gegenbauer polynomials
(Proposition \ref{prop:keyreprn}) and reduce the problem to showing that $C(\rho \bfe_1,\bfell)$
is a convex function of $\cos \alpha$ (Theorem \ref{thm:reformlmain}). The key ingredients in the proof are the addition theorem for the Gegenbauer polynomials
(\eqref{eqn:addthm} in Section 2), a variant of Gegenbauer's product formula
(Lemma \ref{lem:Gegenbauer1874}) and the positivity of a certain series involving Gegenbauer polynomials.

Using an explicit formula for $C(x, \bfn_x)=C(|x|\bfe_1, \boldsymbol{\ell}_0)$ (see \eqref{eqn:Melen} below), we can reformulate Theorem \ref{thm:main} as follows.

\begin{theorem}\label{thm:main2}
For every $u\in h^{\infty}$ and every $x\in \ball$, we have the following sharp inequality:
\[
|\nabla u(x)| ~\leq~ \frac{c_n}{1-|x|^2} \left\{\int\limits_{-1}^1
\frac {\left| t - \frac{n-2}n |x| \right| (1-t^2)^{\frac {n-3}{2}}}
{(1-2t|x|+|x|^2)^{\frac {n-2}{2}}} dt \right\}\|u\|_{\infty},
\]
where $c_n:= \frac {2\Gamma(\frac {n+2}{2})} {\Gamma(\frac{1}{2}) \Gamma(\frac {n-1}{2})}$, with $\Gamma(x)$ the Gamma function.
\end{theorem}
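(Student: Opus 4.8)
The plan is to obtain Theorem \ref{thm:main2} as an immediate consequence of Theorem \ref{thm:main} together with the explicit evaluation of the radial constant recorded in \eqref{eqn:Melen}. By the very definition of $C(x)$, the inequality $|\nabla u(x)| \le C(x)\|u\|_{\infty}$ holds for every $u\in h^{\infty}$ and $C(x)$ is the smallest admissible constant; hence the assertion of Theorem \ref{thm:main2} is precisely the identity
\[
C(x) = \frac{c_n}{1-|x|^2}\int_{-1}^1 \frac{\left|t - \frac{n-2}{n}|x|\right|\,(1-t^2)^{\frac{n-3}{2}}}{\left(1-2t|x|+|x|^2\right)^{\frac{n-2}{2}}}\,dt .
\]
For $x\neq 0$, Theorem \ref{thm:main} gives $C(x)=C(x,\bfn_x)$, so it suffices to show that $C(x,\bfn_x)$ equals the right-hand side. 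By rotational invariance we may take $x=\rho\,\bfe_1$ with $\rho=|x|$, so that $\bfn_x=\bfe_1$ and $C(x,\bfn_x)=C(\rho\,\bfe_1,\boldsymbol{\ell}_0)$, which is exactly the quantity given in closed form by \eqref{eqn:Melen}. Thus the whole of Theorem \ref{thm:main2} reduces to substituting \eqref{eqn:Melen} into the conclusion of Theorem \ref{thm:main}.

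It remains to recall how the closed form \eqref{eqn:Melen} arises, since this is where the analytic content sits. Writing $P(x,\zeta)=\sigma_{n-1}^{-1}(1-|x|^2)|x-\zeta|^{-n}$ for the Poisson kernel of $\ball$ (with $\sigma_{n-1}:=|\sphere|$), and using that every $u\in h^{\infty}$ is the Poisson integral of some $f\in L^{\infty}(\sphere)$ with $\|f\|_{\infty}=\|u\|_{\infty}$, a standard duality argument identifies the sharp constant as an $L^1$-norm of the kernel's derivative,
\[
C(x,\boldsymbol{\ell})=\int_{\sphere}\left|\langle\nabla_x P(x,\zeta),\boldsymbol{\ell}\rangle\right|\,d\sigma(\zeta),
\]
the extremal boundary data being $f=\operatorname{sgn}\langle\nabla_x P(x,\zeta),\boldsymbol{\ell}\rangle$, which both attains the value and certifies sharpness. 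For $\boldsymbol{\ell}=\bfn_x$ the integrand is $|\partial_\rho P|$, and integrating out the $(n-2)$ variables orthogonal to $\bfe_1$ collapses the surface integral to a one-dimensional integral in $t=\langle\bfe_1,\zeta\rangle$ against the Jacobian weight $(1-t^2)^{\frac{n-3}{2}}$ and the surface-area factor $\sigma_{n-2}:=|\mathbb{S}^{n-2}|$.

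The remaining step is the explicit evaluation. A direct differentiation shows that $\partial_\rho P$ is equal to a factor \emph{linear} in $t$ divided by $(1-2\rho t+\rho^2)^{\frac{n+2}{2}}$, and this linear factor has a single zero in $(-1,1)$, so $\partial_\rho P$ changes sign exactly once. Splitting the integral at that point and integrating by parts in $t$ then lowers the exponent of the denominator from $\frac{n+2}{2}$ to $\frac{n-2}{2}$, simultaneously producing the prefactor $(1-|x|^2)^{-1}$ and replacing the linear factor by $\bigl|t-\frac{n-2}{n}|x|\bigr|$; collecting the constants via $\Gamma(\frac{n+2}{2})=\frac n2\Gamma(\frac n2)$ gives exactly $c_n=n\,\sigma_{n-2}/\sigma_{n-1}$ and the integral displayed above. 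This is the content of \eqref{eqn:Melen}, which I invoke.

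The one genuinely delicate point is this last computation: one must check that after integration by parts the surviving linear factor is precisely $t-\frac{n-2}{n}|x|$ (and not the zero of $\partial_\rho P$ itself, which sits at a different location) and that the endpoint contributions vanish. For $n\ge 4$ the weight $(1-t^2)^{\frac{n-3}{2}}$ annihilates the boundary terms automatically, but for $n=3$ the weight is identically $1$, so the boundary terms at $t=\pm1$ must be shown to cancel between the two pieces of the split integral; this case deserves a separate verification. Finally, the excluded value $x=0$ is handled by symmetry, since $C(0,\boldsymbol{\ell})$ is independent of $\boldsymbol{\ell}$ and hence $C(0)=C(0,\bfe_1)$, or equivalently by noting that both sides of the identity are continuous on $\ball$ and agree on $\ball\setminus\{0\}$. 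This completes the reduction, and with \eqref{eqn:Melen} in hand the theorem follows.
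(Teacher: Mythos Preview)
Your proposal is correct and follows essentially the same route as the paper: Theorem~\ref{thm:main2} is obtained from Theorem~\ref{thm:main} by specializing Melentijevi\'c's formula \eqref{eqn:Melen} to $\alpha=0$ (where the inner $y$-integral evaluates to a Beta integral, yielding the one-dimensional form with constant $c_n$). Your extended sketch of how \eqref{eqn:Melen} itself is derived goes beyond what the paper does---it simply cites \cite{Mel19}---and is not needed for the argument, though your observation about handling $x=0$ by continuity or symmetry is a valid point the paper leaves implicit.
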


\section{Preliminaries on the Gegenbauer polynomials}

The Gegenbauer polynomial $C_k^{\lambda}(x)$ of degree $k$ associated to $\lambda$ is defined to be
the coefficient of $z^k$ in the expansion of $(1-2xz+z^2)^{-\lambda}$ in powers of $z$:
\begin{equation}\label{eqn:generatingformula}
(1-2xz+z^2)^{-\lambda} = \sum_{k=0}^{\infty} C_k^{\lambda}(x) z^k, \qquad -1<x<1,\; |z|<1 .
\end{equation}

We collect here, for the reader¡¯s convenience, all necessary facts on
the Gegenbauer polynomials.

\begin{enumerate}
\item[(i)]
Explicit representation (\cite[p.175, (18)]{EMOT53b}): if $\lambda>-1/2$,
\begin{equation}\label{eqn:explicit}
C_k^{\lambda}(x)=\sum_{j=0}^{[ k/2 ]}
\frac{(-1)^j (\lambda)_{k-j}}{j!\,(k-2j)!}\,(2x)^{k-2j}.
\end{equation}
In particular,
\begin{equation}\label{eqn:specialcases}
C_0^{\lambda}(x)=1, \quad C_1^{\lambda}(x)=2\lambda x.
\end{equation}
Here and throughout the paper, $(\lambda)_k$ denotes the Pochhammer symbol (or the
shifted factorial) which is defined by
\[
(\lambda)_0 := 1, \quad (\lambda)_k := \lambda(\lambda+1)\ldots(\lambda+k-1)
\quad \text{ for } k\geq 1.
\]

\item[(ii)]
Orthogonality relation (\cite[p.177, (16) and (17)]{EMOT53a}): if $\lambda\neq 0$,
\begin{equation}\label{eqn:orthorelation}
\int\limits_{-1}^1 C_k^{\lambda}(x) \, C_l^{\lambda}(x)\, (1-x^2)^{\lambda-\frac{1}{2}} \,d x
~=~ \begin{cases}
0, & k\neq l,\\
\dfrac{\Gamma(\frac {1}{2})\,\Gamma(\lambda+\frac {1}{2})\, (2\lambda)_k}{\Gamma(\lambda) \, (k+\lambda)\, k!},& k=l.
\end{cases}
\end{equation}

\item[(iii)]
Differentiation formula (\cite[p.176, (23)]{EMOT53b}): for $m \leq k$,
\begin{equation} \label{eqn:diffofgeng1}
\frac {d^m}{d x^m} C_k^{\lambda}(x) = 2^m (\lambda)_m \,C_{k-m}^{\lambda+m}(x).
\end{equation}

\item[(iv)]
Rodrigues' formula (\cite[p.175, (11)]{EMOT53b}):
\begin{equation}\label{eqn:Rodrigues}
C_k^{\lambda}(x) = \frac {(-1)^k (2\lambda)_k}{2^k k! (\lambda+\frac {1}{2})_k} (1-x^2)^{\frac {1}{2} -\lambda}
\frac {d^k}{dx^k} \left\{(1-x^2)^{k+\lambda-\frac {1}{2}} \right\}.
\end{equation}

\item[(v)]
Gegenbauer's addition theorem (\cite[p.30, (4.7)]{Ask75}):
\begin{align} \label{eqn:addthm}
C_k^{\lambda}(& \cos \theta \cos \varphi + \sin \theta \sin \varphi \cos \psi)\\
&~=~ \frac {\Gamma(2\lambda -1)} {\Gamma^2(\lambda)} \sum_{j=0}^{k}
\frac {2^{2j} \Gamma(k-j+1) \Gamma^2 (\lambda+j)} {\Gamma (k+2\lambda+j)} (2\lambda+2j-1)
\notag\\
& \qquad \quad \times (\sin\theta)^j (\sin\varphi)^j  C_{k-j}^{\lambda+j} (\cos\theta) C_{k-j}^{\lambda+j} (\cos\varphi)
C_j^{\lambda -\frac {1}{2}}(\cos \psi). \notag
\end{align}

\item[(vi)]
Gegenbauer's product formula (\cite[p.30, (4.10)]{Ask75}):
\begin{align}\label{eqn:prodform}
C_{k}^{\lambda} &(\cos\varphi) C_k^{\lambda}(\cos \psi) \\
& ~=~ \frac {\Gamma(\lambda+\frac {1}{2})} {\Gamma (\frac {1}{2}) \Gamma(\lambda)} \frac {(2\lambda)_k}{k!}
\int\limits_{0}^{\pi}  C_k^{\lambda}( \cos \varphi \cos \psi + \sin \varphi \sin \psi \cos \theta)
(\sin\theta)^{2\lambda-1} d\theta. \notag
\end{align}
\end{enumerate}

For $\lambda> 0$ we write
\[
\widetilde{K}_{\lambda}(x,y,z) ~:=~ \frac {\Gamma(\lambda+\frac {1}{2})}{\Gamma(\lambda)\Gamma(\frac {1}{2})} \frac {(1-x^2-y^2-z^2+2xyz)^{\lambda-1}}
{(1-x^2)^{\lambda-\frac {1}{2}} (1-y^2)^{\lambda-\frac {1}{2}}}, \quad x,y,z\in (-1,1).
\]

The following variant of Gegenbauer's product formula plays a key role in our proof of the generalized Khavinson conjecture.
\begin{lemma}\label{lem:Gegenbauer1874}
If $\lambda>0$ and $-1<x,y<1$, then
\begin{equation}\label{eqn:prodfml}
C_k^{\lambda}(x) C_k^{\lambda}(y) ~=~ \frac {(2\lambda)_k} {k!}
\int\limits_{-1}^1 C_k^{\lambda}(z) K_{\lambda}(x,y,z) dz
\end{equation}
with
\[
K_{\lambda}(x,y,z) ~:=~ \begin{cases}
\widetilde{K}_{\lambda}(x,y,z), & \text{if }\; 1-x^2-y^2-z^2+2xyz>0,\\
0,& \text{otherwise.}
\end{cases}
\]
\end{lemma}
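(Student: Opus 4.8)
The plan is to derive the identity directly from Gegenbauer's product formula \eqref{eqn:prodform} by the change of variables that converts the angular integral over $\theta$ into an integral over the inner argument $z$. Writing $x=\cos\varphi$ and $y=\cos\psi$ with $\varphi,\psi\in(0,\pi)$, so that $\sin\varphi=\sqrt{1-x^2}$ and $\sin\psi=\sqrt{1-y^2}$ are positive, I would set
\[
z=z(\theta):=xy+\sqrt{1-x^2}\,\sqrt{1-y^2}\,\cos\theta,\qquad \theta\in[0,\pi],
\]
which is precisely the argument appearing inside $C_k^{\lambda}$ on the right-hand side of \eqref{eqn:prodform}. As $\theta$ runs from $0$ to $\pi$, the variable $z$ decreases monotonically from $z_+:=xy+\sqrt{1-x^2}\sqrt{1-y^2}$ to $z_-:=xy-\sqrt{1-x^2}\sqrt{1-y^2}$, so the substitution is a bijection onto $(z_-,z_+)$.

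Next I would compute the Jacobian and rewrite the weight $(\sin\theta)^{2\lambda-1}$ in terms of $z$. From $dz=-\sqrt{1-x^2}\sqrt{1-y^2}\,\sin\theta\,d\theta$ one gets $(\sin\theta)^{2\lambda-1}d\theta=-(\sin\theta)^{2\lambda-2}(1-x^2)^{-1/2}(1-y^2)^{-1/2}\,dz$. The crucial algebraic identity is
\[
(1-x^2)(1-y^2)-(z-xy)^2=1-x^2-y^2-z^2+2xyz,
\]
which yields
\[
\sin^2\theta=1-\frac{(z-xy)^2}{(1-x^2)(1-y^2)}=\frac{1-x^2-y^2-z^2+2xyz}{(1-x^2)(1-y^2)}.
\]
Substituting $(\sin\theta)^{2\lambda-2}=(\sin^2\theta)^{\lambda-1}$ and simplifying the powers of $(1-x^2)$ and $(1-y^2)$ turns the measure into exactly $\dfrac{\Gamma(\lambda)\Gamma(\frac12)}{\Gamma(\lambda+\frac12)}\,\widetilde{K}_{\lambda}(x,y,z)\,dz$, up to the orientation-reversing sign, which I absorb by flipping the limits back to $\int_{z_-}^{z_+}$.

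The same identity also pins down the support: since $1-x^2-y^2-z^2+2xyz=-(z-z_+)(z-z_-)$ is a downward parabola in $z$ with roots $z_\pm$, it is positive exactly on $(z_-,z_+)$. Hence extending the integrand by $0$ outside this interval --- which is precisely the definition of $K_{\lambda}$ --- lets me replace $\int_{z_-}^{z_+}$ by $\int_{-1}^1$ without changing its value. Feeding this back into \eqref{eqn:prodform}, the prefactor $\frac{\Gamma(\lambda+\frac12)}{\Gamma(\frac12)\Gamma(\lambda)}$ cancels against the reciprocal Gamma-factor produced by the change of variables, leaving precisely $\frac{(2\lambda)_k}{k!}\int_{-1}^1 C_k^{\lambda}(z)K_{\lambda}(x,y,z)\,dz$, as claimed. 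The only points requiring care --- and the main, if modest, obstacle --- are the exponent bookkeeping when converting $(\sin\theta)^{2\lambda-1}d\theta$ into the $z$-measure, and confirming that the endpoints $z_\pm$ coincide with the zero set of the density so that the passage to the full interval $(-1,1)$ is justified; the hypothesis $\lambda>0$ guarantees integrability of $(\sin\theta)^{2\lambda-1}$ near $\theta=0,\pi$, and hence of the resulting $z$-integrand.
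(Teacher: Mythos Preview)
Your proof is correct and follows exactly the approach the paper indicates: the paper's own proof merely states that the lemma ``follows immediately from a change of variables in Gegenbauer's product formula \eqref{eqn:prodform}'' (and notes it is a special case of a result of Gasper), and you have supplied precisely that change of variables with all the details checked. There is nothing to add.
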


\begin{proof}
This follows immediately from a change of variables in Gegenbauer's product formula \eqref{eqn:prodform}.
It is also a special case of Theorem 1 of \cite{Gas71}.
\end{proof}

\begin{lemma}
If $\lambda\neq 1$ then
\begin{equation}\label{eqn:diffofgeng2}
\frac {d}{dx} \left\{(1-x^2)^{\lambda-\frac {1}{2}} C_k^{\lambda}(x) \right\}
~=~  - \frac {(k+1)(k+2\lambda-1)}{2(\lambda-1)} (1-x^2)^{\lambda-\frac {3}{2}} C_{k+1}^{\lambda-1}(x).
\end{equation}
\end{lemma}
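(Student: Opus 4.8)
The plan is to derive the identity directly from Rodrigues' formula \eqref{eqn:Rodrigues}, which is the only tool among the listed facts that naturally carries the weight $(1-x^2)^{\lambda-\frac12}$ alongside $C_k^{\lambda}$. First I would rewrite \eqref{eqn:Rodrigues} as
\[
(1-x^2)^{\lambda-\frac12}\, C_k^{\lambda}(x) = \frac{(-1)^k (2\lambda)_k}{2^k\, k!\, (\lambda+\frac12)_k}\,\frac{d^k}{dx^k}\bigl\{(1-x^2)^{k+\lambda-\frac12}\bigr\},
\]
so that the expression to be differentiated is, up to a constant, a $k$-th derivative of a power of $1-x^2$. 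Differentiating once more merely raises the order of the derivative:
\[
\frac{d}{dx}\bigl\{(1-x^2)^{\lambda-\frac12} C_k^{\lambda}(x)\bigr\} = \frac{(-1)^k (2\lambda)_k}{2^k\, k!\, (\lambda+\frac12)_k}\,\frac{d^{k+1}}{dx^{k+1}}\bigl\{(1-x^2)^{k+\lambda-\frac12}\bigr\}.
\]

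The structural heart of the argument is the observation that the exponent $k+\lambda-\frac12$ can be read as $(k+1)+(\lambda-1)-\frac12$. Hence the $(k+1)$-th derivative appearing above is precisely the one produced by Rodrigues' formula \eqref{eqn:Rodrigues} with $k$ and $\lambda$ replaced by $k+1$ and $\lambda-1$; this is where the hypothesis $\lambda\neq 1$ enters, guaranteeing $\lambda-1\neq 0$ so that $C_{k+1}^{\lambda-1}$ is meaningful. Applying that instance of \eqref{eqn:Rodrigues} and solving for the derivative gives
\[
\frac{d^{k+1}}{dx^{k+1}}\bigl\{(1-x^2)^{k+\lambda-\frac12}\bigr\} = (-1)^{k+1}\,\frac{2^{k+1}(k+1)!\,(\lambda-\frac12)_{k+1}}{(2\lambda-2)_{k+1}}\,(1-x^2)^{\lambda-\frac32}\, C_{k+1}^{\lambda-1}(x).
\]
Substituting this back into the previous display yields the claimed formula, with the explicit constant $-2(k+1)\,\dfrac{(2\lambda)_k\,(\lambda-\frac12)_{k+1}}{(\lambda+\frac12)_k\,(2\lambda-2)_{k+1}}$.

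The only remaining work is to collapse this constant to $-\dfrac{(k+1)(k+2\lambda-1)}{2(\lambda-1)}$, which I expect to be the one place demanding care. The two telescoping identities $\dfrac{(\lambda-\frac12)_{k+1}}{(\lambda+\frac12)_k}=\lambda-\frac12$ and $\dfrac{(2\lambda)_k}{(2\lambda-2)_{k+1}}=\dfrac{2\lambda+k-1}{(2\lambda-2)(2\lambda-1)}$ do the job: the factors of $2\lambda-1$ cancel against $\lambda-\frac12=\tfrac12(2\lambda-1)$, and $2\lambda-2=2(\lambda-1)$ supplies the denominator. This is routine Pochhammer bookkeeping and constitutes the main (but mild) obstacle; everything of substance is contained in the remark that one differentiation of the Rodrigues representation shifts the parameter pair from $(k,\lambda)$ to $(k+1,\lambda-1)$. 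A quick sanity check at $k=0$, using $C_0^{\lambda}=1$ and $C_1^{\lambda-1}(x)=2(\lambda-1)x$ from \eqref{eqn:specialcases}, confirms both sides equal $-(2\lambda-1)x(1-x^2)^{\lambda-\frac32}$.
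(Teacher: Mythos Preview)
Your argument is correct and is exactly the route the paper takes: the paper's proof consists of the single sentence ``This is immediate from Rodrigues' formula \eqref{eqn:Rodrigues},'' and what you have written is precisely the unpacking of that remark---rewriting $(1-x^2)^{\lambda-\frac12}C_k^{\lambda}(x)$ via \eqref{eqn:Rodrigues}, differentiating once, and recognizing the result as the $(k+1,\lambda-1)$ instance of the same formula. Your Pochhammer simplifications and the $k=0$ sanity check are all in order.
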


\begin{proof}
This is immediate from Rodrigues' formula \eqref{eqn:Rodrigues}.
\end{proof}

\begin{lemma}\label{lem:intbyterms}
Let $\lambda>-1/2$ and $-1<s<1$. Then we have
\begin{align}
\label{eqn:intbyterms}
\int\limits_{-1}^{1} & |x-s| (1-x^2)^{\lambda-\frac {1}{2}} C_k^{\lambda}(x) dx \\
&~=~ \frac {8\lambda(\lambda+1)} {k(k-1)(k+2\lambda)(k+2\lambda+1)}
(1-s^2)^{\lambda+\frac {3}{2}}  C_{k-2}^{\lambda+2} (s) \notag
\end{align}
for  $k=2,3,\ldots$.
\end{lemma}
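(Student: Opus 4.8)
The plan is to split the integral at $x=s$, writing $|x-s|=x-s$ on $[s,1]$ and $|x-s|=s-x$ on $[-1,s]$, and then to integrate by parts twice on each piece. The guiding observation is that the differentiation formula \eqref{eqn:diffofgeng2}, read from right to left, produces antiderivatives that simultaneously raise the parameter by one and lower the degree by one — precisely the two operations needed to pass from $C_k^{\lambda}$ to $C_{k-2}^{\lambda+2}$.

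Concretely, applying \eqref{eqn:diffofgeng2} with $\lambda$ replaced by $\lambda+1$ and $k$ by $k-1$ gives
\[
(1-x^2)^{\lambda-\frac12} C_k^{\lambda}(x) = -\frac{2\lambda}{k(k+2\lambda)}\,\frac{d}{dx}\Bigl\{(1-x^2)^{\lambda+\frac12} C_{k-1}^{\lambda+1}(x)\Bigr\}.
\]
Writing $F(x):=(1-x^2)^{\lambda+\frac12} C_{k-1}^{\lambda+1}(x)$, I would integrate $\int_s^1 (x-s)F'(x)\,dx$ by parts on $[s,1]$; the boundary term $[(x-s)F(x)]_s^1$ vanishes because $x-s=0$ at the lower limit and $F(1)=0$. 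Here the hypothesis $\lambda>-1/2$ enters: the exponent $\lambda+\frac12$ is positive, so $(1-x^2)^{\lambda+1/2}$ kills the endpoints $\pm1$. The same cancellation occurs on $[-1,s]$, where instead $F(-1)=0$. Thus both halves reduce to integrals of $(1-x^2)^{\lambda+\frac12}C_{k-1}^{\lambda+1}(x)$.

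For the second integration I would apply \eqref{eqn:diffofgeng2} once more, now with $\lambda$ replaced by $\lambda+2$ and $k$ by $k-2$, to obtain
\[
(1-x^2)^{\lambda+\frac12} C_{k-1}^{\lambda+1}(x) = -\frac{2(\lambda+1)}{(k-1)(k+2\lambda+1)}\,\frac{d}{dx}\Bigl\{(1-x^2)^{\lambda+\frac32} C_{k-2}^{\lambda+2}(x)\Bigr\}.
\]
Integrating over $[s,1]$ and over $[-1,s]$ and using $(1-x^2)^{\lambda+3/2}\big|_{x=\pm1}=0$, a short computation shows that each of the two halves contributes exactly $\tfrac{4\lambda(\lambda+1)}{k(k-1)(k+2\lambda)(k+2\lambda+1)}(1-s^2)^{\lambda+3/2}C_{k-2}^{\lambda+2}(s)$; adding them produces the stated identity with the factor $8\lambda(\lambda+1)$.

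There is no serious obstacle here — the argument is essentially an exercise in integration by parts — so the main task is the careful bookkeeping of the four multiplicative constants and the verification that every boundary contribution vanishes (those at $x=\pm1$ from the powers of $1-x^2$, those at $x=s$ from the linear factor). Two minor points deserve a remark. First, formula \eqref{eqn:diffofgeng2} requires its parameter to differ from $1$, so the two applications above are legitimate only when $\lambda\neq0$ (the forbidden value $-1$ never occurs since $\lambda>-1/2$). Second, the excluded case $\lambda=0$ is harmless: both sides of \eqref{eqn:intbyterms} then vanish identically for $k\geq2$, since the numerator $8\lambda(\lambda+1)$ and the Gegenbauer polynomial $C_k^{0}$ are both zero; alternatively, one observes that each side is analytic in $\lambda$ on $(-1/2,\infty)$ and fills the single point by continuity.
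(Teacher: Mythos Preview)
Your proposal is correct and follows essentially the same route as the paper: split the integral at $x=s$, use \eqref{eqn:diffofgeng2} to rewrite $(1-x^2)^{\lambda-1/2}C_k^{\lambda}(x)$ as a derivative, integrate by parts, and repeat once more to land on $C_{k-2}^{\lambda+2}$. Your write-up is in fact slightly more careful than the paper's, since you explicitly justify the vanishing of the boundary terms via $\lambda>-1/2$ and address the excluded value $\lambda=0$ where \eqref{eqn:diffofgeng2} is not directly applicable.
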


\begin{proof}
By using \eqref{eqn:diffofgeng2} and integrating by parts, we obtain
\begin{align*}
\int\limits_{-1}^{s}  (s-x) & (1-x^2)^{\lambda-\frac {1}{2}} C_k^{\lambda}(x) dx\\
& \qquad ~=~ - \frac {2\lambda} {k(k+2\lambda)} \int\limits_{-1}^{s}  (s-x)
d\left((1-x^2)^{\lambda+\frac {1}{2}} C_{k-1}^{\lambda+1}(x)\right)\\
& \qquad ~=~ \frac {2\lambda} {k(k+2\lambda)} \int\limits_{-1}^{s}
(1-x^2)^{\lambda+\frac {1}{2}} C_{k-1}^{\lambda+1}(x) dx\\
& \qquad ~=~ \frac {4\lambda(\lambda+1)} {(k-1)k(k+2\lambda)(k+2\lambda+1)}
(1-s^2)^{\lambda+\frac {3}{2}} C_{k-2}^{\lambda+2}(s),
\end{align*}
where the last equality again follows from \eqref{eqn:diffofgeng2}.
In the same way we get
\begin{align*}
\int\limits_{s}^{1}  (x-s) & (1-x^2)^{\lambda-\frac {1}{2}} C_k^{\lambda}(x) dx\\
& \qquad ~=~ \frac {4\lambda(\lambda+1)} {(k-1)k(k+2\lambda)(k+2\lambda+1)}
(1-s^2)^{\lambda+\frac {3}{2}} C_{k-2}^{\lambda+2}(s).
\end{align*}
Then adding these two identities gives \eqref{eqn:intbyterms}.
\end{proof}

\section{A new representation formula for $C(\rho \bfe_1,\bfell)$}

The following representation of $C(\rho \bfe_1,\bfell)$ in terms of the Gegenbauer polynomials is very
efficient for solving the extremal problem \eqref{eqn:extremal}.

\begin{proposition}\label{prop:keyreprn}
We have
\begin{align*} 
C(\rho \bfe_1, \bfell) ~=~&  \frac {c_n} {1-\rho^2}
\Biggl\{\int\limits_{-1}^1 \left| \frac {n-2}{n} \rho \cos\alpha -x \right| (1-x^2)^{\frac {n-3}{2}} dx \\
& \quad + (n-2)\rho \cos \alpha \int\limits_{-1}^1 \left| \frac {n-2}{n} \rho \cos\alpha -x \right| (1-x^2)^{\frac {n-3}{2}} x dx \notag\\
& \quad + \frac {2}{n^2-1} \left[1-\frac {(n-2)^2}{n^2} \rho^2 \cos^2 \alpha\right]^{\frac {n+1}{2}} \notag\\
& \qquad \quad \times \sum_{k=2}^{\infty} \frac {(k-2)!}{(n+2)_{k-2}} C_{k-2}^{\frac {n+2}{2}} \left(\frac {n-2}{n} \rho \cos \alpha\right)
C_k^{\frac {n-2}{2}}(\cos \alpha) \rho^k \Biggr\} \notag
\end{align*}
for any $\rho\in [0,1)$ and  any $\alpha \in [0,\pi]$, where $c_n:= \frac {2\Gamma(\frac {n+2}{2})} {\Gamma(\frac{1}{2}) \Gamma(\frac {n-1}{2})}$.
\end{proposition}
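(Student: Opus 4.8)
The plan is to pass from the variational definition of $C(\rho\bfe_1,\bfell)$ to an integral over $\sphere$, reorganize that integral so the sign of the integrand is governed by $\langle\zeta,\bfell\rangle$ alone, collapse it to a one-dimensional integral by Gegenbauer's product formula, and finish with Lemma \ref{lem:intbyterms}. First I would record the classical duality: writing each $u\in h^{\infty}$ as a Poisson integral $u(x)=\int_{\sphere}P(x,\zeta)f(\zeta)\,d\sigma(\zeta)$ with $P(x,\zeta)=(1-|x|^2)|x-\zeta|^{-n}$, $\|u\|_{\infty}=\|f\|_{\infty}$, and $\sigma$ the normalized surface measure, differentiation under the integral together with the choice $f=\operatorname{sgn}\langle\nabla_x P,\bfell\rangle$ gives
\[
C(\rho\bfe_1,\bfell)=\int_{\sphere}\bigl|\langle\nabla_x P(\rho\bfe_1,\zeta),\bfell\rangle\bigr|\,d\sigma(\zeta).
\]
A direct differentiation at $x=\rho\bfe_1$ yields, with $D:=1-2\rho\zeta_1+\rho^2$,
\[
\langle\nabla_x P(\rho\bfe_1,\zeta),\bfell\rangle=\frac{-2\rho\cos\alpha}{D^{n/2}}+\frac{n(1-\rho^2)\bigl(\langle\zeta,\bfell\rangle-\rho\cos\alpha\bigr)}{D^{(n+2)/2}}.
\]

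The crux is the absolute value: the integrand changes sign across a hyperplane that is \emph{not} orthogonal to $\bfell$, so one cannot integrate over the level sets of $\langle\zeta,\bfell\rangle$ leaf by leaf. I would remove this difficulty by proving that, with the constant tracked through the reduction,
\[
C(\rho\bfe_1,\bfell)=\frac{c_n}{1-\rho^2}\int_{\sphere}\frac{\bigl|\tfrac{n-2}{n}\rho\cos\alpha-\langle\zeta,\bfell\rangle\bigr|}{(1-2\rho\zeta_1+\rho^2)^{(n-2)/2}}\,d\sigma(\zeta),
\]
in which the sign now depends only on $\langle\zeta,\bfell\rangle$. Lowering the exponent from $(n+2)/2$ to $(n-2)/2$ and moving the sign change to $\langle\zeta,\bfell\rangle=\tfrac{n-2}{n}\rho\cos\alpha$ should follow from an integration by parts in the radial variable together with the Gegenbauer identity \eqref{eqn:diffofgeng2}; in the purely tangential case $\alpha=\pi/2$ it collapses to Euler's transformation of a ${}_2F_1$, which makes the mechanism transparent. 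I expect this reorganization to be the main obstacle.

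Granting it, I would foliate $\sphere$ by the sets $\{\langle\zeta,\bfell\rangle=x\}$, on each of which the numerator is constant. Disintegrating $d\sigma$ produces the weight $(1-x^2)^{(n-3)/2}$, and after expanding $(1-2\rho\zeta_1+\rho^2)^{-(n-2)/2}=\sum_k C_k^{(n-2)/2}(\zeta_1)\rho^k$ the fibre integral of each $C_k^{(n-2)/2}(\zeta_1)$ is, by Gegenbauer's product formula \eqref{eqn:prodform} with $\lambda=(n-2)/2$ (the azimuthal angle about $\bfell$ being exactly the variable $\theta$ there), a multiple of $C_k^{(n-2)/2}(\cos\alpha)\,C_k^{(n-2)/2}(x)$ with the coefficient $k!/(n-2)_k$. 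This gives
\[
C(\rho\bfe_1,\bfell)=\frac{c_n}{1-\rho^2}\int_{-1}^1\Bigl|\tfrac{n-2}{n}\rho\cos\alpha-x\Bigr|(1-x^2)^{\frac{n-3}{2}}G(x)\,dx,\qquad G(x)=\sum_{k\ge 0}\frac{k!}{(n-2)_k}C_k^{(n-2)/2}(\cos\alpha)\,C_k^{(n-2)/2}(x)\,\rho^k.
\]

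Finally I would insert the series for $G$ and integrate term by term. The terms $k=0,1$, where $C_0^{(n-2)/2}=1$ and $C_1^{(n-2)/2}(x)=(n-2)x$, reproduce the first two integrals in the statement. For $k\ge 2$, Lemma \ref{lem:intbyterms} with $\lambda=(n-2)/2$ evaluates each summand as a multiple of $(1-s^2)^{(n+1)/2}C_{k-2}^{(n+2)/2}(s)$ with $s=\tfrac{n-2}{n}\rho\cos\alpha$ and $1-s^2=1-\tfrac{(n-2)^2}{n^2}\rho^2\cos^2\alpha$; since $8\lambda(\lambda+1)=2n(n-2)$, the rational factors in $k$ collapse to $\tfrac{2}{n^2-1}\,(k-2)!/(n+2)_{k-2}$, giving precisely the asserted series. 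Keeping careful account of the normalizations (and checking the value at $\rho=0$) confirms the constant $c_n$ and completes the proof.
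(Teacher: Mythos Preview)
Your proposal is correct and follows essentially the same route as the paper: the integral identity you call ``the main obstacle'' is precisely Melentijevi\'c's formula \eqref{eqn:Melen}, which the paper simply quotes from \cite{Mel19}; after that, your fibre integral over the slices $\{\langle\zeta,\bfell\rangle=x\}$ is exactly the content of Lemma~\ref{lem:innerintl}, and the term-by-term evaluation via Lemma~\ref{lem:intbyterms} is identical to the paper's final step. One small difference worth noting: you evaluate the fibre integral by invoking Gegenbauer's product formula \eqref{eqn:prodform} directly, whereas the paper expands via the addition theorem \eqref{eqn:addthm} and must then treat $n=3$ separately (since \eqref{eqn:addthm} degenerates at $\lambda=1/2$); your route is slightly cleaner here and handles all $n\geq 3$ uniformly. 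The only soft spot is your sketch for re-deriving \eqref{eqn:Melen} via ``integration by parts in the radial variable,'' which is vague and not obviously the right mechanism---but since the paper treats this formula as a black-box citation, your argument is no less complete than the paper's own.
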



In \cite{Mel19}, Melentijevi\'c obtained the following formula (\cite[p. 1051]{Mel19}):

\begin{align}\label{eqn:Melen}
C(\rho \bfe_1,\bfell) ~=~& \frac{n(n-2)}{2\pi} \frac {1} {1-\rho^2}
\int\limits_{-1}^1 \left| \frac{n-2}n \rho \cos\alpha -x \right| \\
& \quad \times \Biggl\{\int\limits_{-\sqrt{1-x^2}}^{\sqrt{1-x^2}} \frac{(1-x^2-y^2)^{\frac{n}2-2}}
{(1-2\rho x \cos \alpha - 2 \rho y \sin\alpha + \rho^2)^{\frac{n}2-1}} dy\Biggr\}dx. \notag
\end{align}
So, we start with an expansion of the inner integral in \eqref{eqn:Melen}.

\begin{lemma}\label{lem:innerintl}
For $\rho \in [0,1]$ and $\alpha \in [0,\pi] $ we have
\begin{align}
\label{eqn:innerintl2}
\int\limits_{-\sqrt{1-x^2}}^{\sqrt{1-x^2}} & \frac {(1-x^2-y^2)^{\frac{n}2-2}}
{\left[1-2\rho (x   \cos \alpha +  y \sin \alpha) + \rho^2 \right]^{\frac{n}2-1}} dy \\
&~=~ \frac {\Gamma \left(\frac {1}{2}\right) \Gamma \left(\frac {n-2}{2}\right)}{\Gamma \left(\frac {n-1}{2}\right)}
(1-x^2)^{\frac {n-3}{2}}
\sum_{k=0}^{\infty} \frac {k!}{(n-2)_k} C_k^{\frac {n-2}{2}}(x) C_k^{\frac {n-2}{2}}(\cos \alpha) \rho^k. \notag
\end{align}
\end{lemma}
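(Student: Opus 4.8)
The plan is to recognize the denominator as the generating function of the Gegenbauer polynomials and to evaluate the resulting coefficient integrals by Gegenbauer's product formula \eqref{eqn:prodform}. Throughout I set $\lambda := \frac{n-2}{2}$, so that $\frac{n}{2}-2 = \lambda-1$ and $\frac{n}{2}-1 = \lambda$, with $\lambda>0$ because $n\geq 3$. Writing $w := x\cos\alpha + y\sin\alpha$, the Cauchy--Schwarz inequality gives $|w|\leq \sqrt{x^2+y^2}\leq 1$ on the domain of integration, so \eqref{eqn:generatingformula} yields, for $0\leq \rho<1$, the expansion $[1-2\rho w+\rho^2]^{-\lambda} = \sum_{k=0}^{\infty} C_k^{\lambda}(w)\rho^k$. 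The first step is to substitute this into the left-hand side of \eqref{eqn:innerintl2} and interchange summation and integration, reducing the claim to the evaluation of
\[
I_k ~:=~ \int\limits_{-\sqrt{1-x^2}}^{\sqrt{1-x^2}} (1-x^2-y^2)^{\lambda-1}\, C_k^{\lambda}(x\cos\alpha + y\sin\alpha)\, dy.
\]

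The step I expect to require the most care is the justification of this interchange. Since $\lambda>0$, the Gegenbauer polynomials attain their maximum modulus on $[-1,1]$ at the endpoint, so $|C_k^{\lambda}(w)|\leq C_k^{\lambda}(1) = (2\lambda)_k/k!$ for $|w|\leq 1$; evaluating \eqref{eqn:generatingformula} at $w=1$ gives $\sum_{k} C_k^{\lambda}(1)\rho^k = (1-\rho)^{-2\lambda}<\infty$ for $\rho<1$. Hence, by the Weierstrass $M$-test, $\sum_k C_k^{\lambda}(w)\rho^k$ converges uniformly in $w\in[-1,1]$ for each fixed $\rho\in[0,1)$, while the weight $(1-x^2-y^2)^{\lambda-1}$ is integrable over the segment, its only possible singularities at $y=\pm\sqrt{1-x^2}$ being integrable because $\lambda-1>-1$. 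Fubini's theorem then legitimizes the term-by-term integration for $\rho\in[0,1)$, and the endpoint $\rho=1$ may be recovered afterwards by a limiting (continuity) argument.

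Next I would compute $I_k$ by casting it in the form of the product formula. Writing $x=\cos\varphi$ with $\varphi\in(0,\pi)$ and substituting $y=\sin\varphi\cos\theta$, $\theta\in(0,\pi)$, one has $dy=-\sin\varphi\sin\theta\,d\theta$, $1-x^2-y^2=\sin^2\varphi\,\sin^2\theta$, and, crucially, $x\cos\alpha+y\sin\alpha = \cos\varphi\cos\alpha + \sin\varphi\sin\alpha\cos\theta$. Collecting the powers of $\sin\varphi$ and $\sin\theta$ turns the coefficient integral into
\[
I_k ~=~ (\sin\varphi)^{2\lambda-1} \int\limits_0^{\pi} C_k^{\lambda}\big(\cos\varphi\cos\alpha + \sin\varphi\sin\alpha\cos\theta\big)\,(\sin\theta)^{2\lambda-1}\, d\theta,
\]
which is exactly the integral on the right-hand side of \eqref{eqn:prodform} with $\psi=\alpha$.

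Finally, applying \eqref{eqn:prodform} I would read off
\[
I_k ~=~ (1-x^2)^{\lambda-\frac12}\,\frac{\Gamma(\frac12)\Gamma(\lambda)}{\Gamma(\lambda+\frac12)}\,\frac{k!}{(2\lambda)_k}\, C_k^{\lambda}(x)\, C_k^{\lambda}(\cos\alpha),
\]
using $(\sin\varphi)^{2\lambda-1}=(1-x^2)^{\lambda-\frac12}=(1-x^2)^{\frac{n-3}{2}}$. Summing $\sum_k I_k\rho^k$ and substituting back $\lambda=\frac{n-2}{2}$ (so that $(2\lambda)_k=(n-2)_k$, $\Gamma(\lambda)=\Gamma(\frac{n-2}{2})$ and $\Gamma(\lambda+\frac12)=\Gamma(\frac{n-1}{2})$) reproduces the right-hand side of \eqref{eqn:innerintl2} verbatim, which completes the argument.
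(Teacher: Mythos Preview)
Your argument is correct, and it is in fact tidier than the paper's own proof. The paper makes the same change of variables (writing $y=\sqrt{1-x^2}\,s$, equivalently $y=\sin\varphi\cos\theta$) and the same generating-function expansion, but then expands each $C_k^{\lambda}(\cos\varphi\cos\alpha+\sin\varphi\sin\alpha\cos\theta)$ via the \emph{addition theorem} \eqref{eqn:addthm} and kills all but the $j=0$ term using the orthogonality relation \eqref{eqn:orthorelation}; because the inner polynomials $C_j^{\lambda-1/2}$ degenerate at $\lambda=\tfrac12$, this forces a separate treatment of the case $n=3$ using the Legendre addition formula. You instead recognize the $\theta$-integral as precisely the right-hand side of the \emph{product formula} \eqref{eqn:prodform} and invert it in one step, which is valid for every $\lambda>0$ and therefore handles all $n\geq 3$ uniformly without a case split. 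The two routes are of course closely related (the product formula is what one obtains from the addition theorem after integrating against $(\sin\theta)^{2\lambda-1}$), but yours is the more direct packaging here. Your justification of the term-by-term integration via $|C_k^{\lambda}(w)|\le C_k^{\lambda}(1)=(2\lambda)_k/k!$ is also a point the paper leaves implicit.
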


\begin{proof}
By making the change of variables $y=\sqrt{1-x^2}s$, we get
\begin{align*}
\int\limits_{-\sqrt{1-x^2}}^{\sqrt{1-x^2}} & \frac {(1-x^2-y^2)^{\frac{n}2-2}}
{\left[1-2\rho (x   \cos \alpha +  y \sin \alpha) + \rho^2 \right]^{\frac{n}2-1}} dy \\
&~=~ (1-x^2)^{\frac {n-3}2} \int\limits_{-1}^{1}  \frac {(1-s^2)^{\frac{n}2 - 2} ds}
{\left[1-2\rho \left(x   \cos \alpha +  s\, \sqrt{1-x^2} \sin \alpha \right) + \rho^2 \right]^{\frac{n}2-1}}.
\end{align*}
So it suffices to prove that
\begin{align}
\label{eqn:innerintl}
\int\limits_{-1}^{1} & \frac {(1-s^2)^{\frac{n}2 - 2} ds}
{\left[1-2\rho \left(x   \cos \alpha +  s\, \sqrt{1-x^2} \sin \alpha \right) + \rho^2 \right]^{\frac{n}2-1}} \\
&~=~ \frac {\Gamma \left(\frac {1}{2}\right) \Gamma \left(\frac {n-2}{2}\right)}{\Gamma \left(\frac {n-1}{2}\right)}
\sum_{k=0}^{\infty} \frac {k!}{(n-2)_k} C_k^{\frac {n-2}{2}}(x) C_k^{\frac {n-2}{2}}(\cos \alpha)\, \rho^k. \notag
\end{align}

We divide the proof of \eqref{eqn:innerintl} into two cases, according to the dimension $n$.

\subsubsection*{Case I: $n>3$.}

By the generating relation \eqref{eqn:generatingformula}, we see that the left hand side of \eqref{eqn:innerintl}
equals
\[
\sum_{k=0}^{\infty} \left\{ \int\limits_{-1}^{1} (1-s^2)^{\frac{n}2 - 2}
C_k^{\frac{n-2}{2}} (x \cos \alpha + s\, \sqrt{1-x^2} \sin \alpha)  ds \right\} \rho^k.
\]
By the addition theorem \eqref{eqn:addthm}, with $x=\cos \theta$ and $t=\cos \psi$, we have
\begin{align*}
C_k^{\frac{n-2}{2}}(& x \cos \alpha + s\, \sqrt{1-x^2} \sin \alpha)\\
&~=~  \sum_{j=0}^{k} \beta_{k,j}\cdot \big(\sqrt{1-x^2}\big)^j (\sin\alpha)^j  C_{k-j}^{\frac{n-2}{2}+j} (x)
C_{k-j}^{\frac{n-2}{2}+j} (\cos\alpha)
C_j^{\frac {n-3}{2}}(s), \notag
\end{align*}
where
\[
\beta_{k,j} ~:=~ \frac {\Gamma(n-3)}{\Gamma^2(\frac {n-2}{2})} \frac {2^{2j} \Gamma(k-j+1)
\Gamma^2 (\frac {n-2}{2} +j)} {\Gamma (k+n-2+j)} (n-3+2j).
\]
It follows that
\begin{align*}
\textrm{LHS of } \eqref{eqn:innerintl} 
~=~& \sum_{k=0}^{\infty} \sum_{j=0}^{k} \beta_{k,j}\cdot \big(\sqrt{1-x^2}\big)^j (\sin\alpha)^j  C_{k-j}^{\frac{n-2}{2}+j} (x)
C_{k-j}^{\frac{n-2}{2}+j} (\cos\alpha) \\
& \quad \times \left\{ \int\limits_{-1}^{1} (1-s^2)^{\frac{n}2 - 2}  C_j^{\frac {n-3}{2}}(s) ds \right\} \rho^k,\\
=~& \frac {\Gamma \left(\frac {1}{2}\right) \Gamma \left(\frac {n-2}{2}\right)}{\Gamma \left(\frac {n-1}{2}\right)}
\sum_{k=0}^{\infty} \beta_{k,0}  C_{k}^{\frac{n-2}{2}} (x)
C_{k}^{\frac{n-2}{2}} (\cos\alpha) \rho^k,
\end{align*}
where in the last equality we have used the orthogonality relation \eqref{eqn:orthorelation}. Noting that $\beta_{k,0}=k!/(n-2)_k$,
this establishes the formula \eqref{eqn:innerintl} in the case $n>3$.

\subsubsection*{Case II: $n=3$.}

By making the substitute $s=\cos \psi$ in the integral and using the the generating relation \eqref{eqn:generatingformula},
we see that
\begin{align}\label{eqn:expansion2}
\textrm{LHS of } \eqref{eqn:innerintl} ~=~& \int\limits_{0}^{\pi}
\frac {d\psi} {\sqrt{1-2\rho \left(x \cos \alpha + \sqrt{1-x^2} \sin \alpha \cos \psi\right) +\rho^2}}\\
& \quad ~=~ \sum_{k=0}^{\infty} \left\{ \int\limits_{0}^{\pi}
P_k  \left(x \cos \alpha + \sqrt{1-x^2} \sin \alpha \cos \psi\right)  d\psi \right\} \rho^k, \notag
\end{align}
where $P_k(x):=C_k^{\frac {1}{2}}(x)$ is the Legendre polynomial of degree $k$.
This time we use the following addition theorem for the Legendre polynomials (see \cite[p.326-328]{WW27}):
\begin{align}\label{eqn:additionfml2}
P_k(& \cos \theta \cos \varphi + \sin \theta \sin \varphi \cos \psi) \\
& ~=~ P_k(\cos \theta) P_k(\cos \varphi)  + 2 \sum_{j=1}^{k}
\frac {(k-j)!}{(k+j)!}  P_{k}^{j} (\cos\theta) P_{k}^{j} (\cos\varphi) \cos (j\psi), \notag
\end{align}
where $P_{k}^{j}(x)$ is the associated Legendre function which is defined by
\[
P_{k}^{j}(x) ~:=~ (-1)^{j} (1-x^2)^{j/2} \frac {d^{j}}{dx^{j}} P_k(x), \qquad -1<x<1.
\]
Substituting \eqref{eqn:additionfml2} into \eqref{eqn:expansion2} yields
\begin{align*}
\textrm{LHS of } \eqref{eqn:innerintl} ~=~&
\pi \sum_{k=0}^{\infty} P_k(x) P_k(\cos \alpha) \rho^k \\
&\qquad + 2 \sum_{k=0}^{\infty} \sum_{j=1}^{k} \frac {(k-j)!}{(k+j)!} P_k^j(x) P_k^j(\cos \alpha) \int\limits_0^{\pi}
\cos (j\psi) d\psi\\
~=~& \pi \sum_{k=0}^{\infty}  C_k^{\frac {1}{2}}(x) C_k^{\frac {1}{2}}(\cos \alpha)\, \rho^k, \notag
\end{align*}
and the proof is complete.
\end{proof}

\begin{proof}[Proof of Proposition \ref{prop:keyreprn}]
Substituting \eqref{eqn:innerintl2} into \eqref{eqn:Melen} yields
\begin{align*}
C(\rho \bfe_1, \bfell)
~=~&  \frac {\Gamma \left(\frac {1}{2}\right) \Gamma \left(\frac {n-2}{2}\right)}{\Gamma \left(\frac {n-1}{2}\right)}
\frac{n(n-2)}{2\pi} \frac {1} {1-\rho^2}
\sum_{k=0}^{\infty} \frac {k!}{(n-2)_k} C_k^{\frac {n-2}{2}}(\cos \alpha) \rho^k \\
&\quad \times \int\limits_{-1}^1 \left| \frac {n-2}{n}\rho \cos\alpha -x \right| (1-x^2)^{\frac {n-3}{2}}
C_k^{\frac {n-2}{2}}(x) dx \\
=~& \frac {c_n} {1-\rho^2}
\Biggl\{\int\limits_{-1}^1 \left| \frac {n-2}{n}\rho \cos\alpha -x \right|  (1-x^2)^{\frac {n-3}{2}} dx \\
& \quad + (n-2)\rho \cos \alpha \int\limits_{-1}^1 \left| \frac {n-2}{n}\rho \cos\alpha -x \right|
(1-x^2)^{\frac {n-3}{2}} x dx \notag\\
& \quad + \sum_{k=2}^{\infty} \frac {k!}{(n-2)_k} C_k^{\frac {n-2}{2}}(\cos \alpha) \rho^k \\
&\quad \quad \times \int\limits_{-1}^1 \left| \frac {n-2}{n}\rho \cos\alpha -x \right| (1-x^2)^{\frac {n-3}{2}}
C_k^{\frac {n-2}{2}}(x) dx \Biggr\},
\end{align*}
where in the last equality we have used \eqref{eqn:specialcases}. Then, an application of Lemma \ref{lem:intbyterms},
with $\lambda=\frac {n-2}{2}$ and $s=\frac {n-2}{n}\rho \cos \alpha$, completes the proof.
\end{proof}

\section{The proof of Theorem \ref{thm:main}}

Just like in \cite{Kal17}, \cite{Mar17} and \cite{Mel19}, we shall prove the following equivalent formulation of Theorem \ref{thm:main}.

\begin{thmbis}{thm:main}\label{thm:mainprm}
For fixed $\rho \in [0,1)$, the function
$\alpha \longmapsto  C(\rho \bfe_1, \bfell)$
attains its maximum on $[0,\pi/2]$ at $\alpha=0$.
\end{thmbis}

In the sequal, we fix $0<\rho<1$ and write $\delta:=\frac {n-2}{n} \rho$. In view of Proposition \ref{prop:keyreprn}, we define
\begin{align}\label{eqn:functionH}
F(t) ~:=~&  \int\limits_{-1}^{1} |\delta t-x| (1-x^2)^{\frac {n-3}{2}} dx, \notag\\
G(t) ~:=~& (n-2)\rho t \int\limits_{-1}^{1} |\delta t-x| (1-x^2)^{\frac {n-3}{2}} x dx, \notag\\
\intertext{and}
H(t) ~:=~& \frac {2}{n^2-1}  (1-\delta^2 t^2)^{\frac {n+1}{2}} \sum_{k=2}^{\infty} \frac {(k-2)!}{(n+2)_{k-2}}
C_{k-2}^{\frac {n+2}{2}}(\delta t) C_{k}^{\frac {n-2}{2}}(t)\, \rho^{k}.
\end{align}
Hence
\begin{equation*}
C(\rho \bfe_1,\bfell) = \frac {c_n} {1-\rho^2} \left[F(\cos\alpha)+G(\cos\alpha)+H(\cos\alpha)\right].
\end{equation*}
Recall that a convex function attains its maximum over an interval at one of the end-points
and note that $C(\rho \bfe_1,\boldsymbol{\ell}_0) = C(\rho \bfe_1,\boldsymbol{\ell}_{\pi})$
(see \cite[Lemma 2.10]{Mar17}). So, we are reduced to prove the following.

\begin{thmbiss}{thm:main}\label{thm:reformlmain}
The function $F+G+H$ is convex on $[-1,1]$.
\end{thmbiss}

To this end, we first compute $F^{\prime\prime} + G^{\prime\prime} + H^{\prime\prime}$.

\begin{lemma}\label{lem:2ndder}
We have
\begin{align*}
F^{\prime\prime}&(t)  + G^{\prime\prime}(t) + H^{\prime\prime}(t) \\
&~=~ 2 \delta^2 (1-\delta^2 t^2)^{\frac {n-3}{2}} \sum_{k=0}^{\infty} \frac {k!}{(n-2)_k}
C_k^{\frac {n-2}{2}}(\delta t) C_k^{\frac {n-2}{2}}(t)\, \rho^k \\
&\qquad - \frac {4n \delta^2}{n-1} (1-\delta^2 t^2)^{\frac {n-1}{2}} \sum_{k=0}^{\infty} \frac {k!}{(n)_k}
C_k^{\frac {n}{2}}(\delta t) C_k^{\frac {n}{2}}(t)\, \rho^k \notag \\
&\qquad + \frac {2n^3 \delta^2}{(n+1)(n-1)(n-2)} (1-\delta^2 t^2)^{\frac {n+1}{2}} \sum_{k=0}^{\infty} \frac {k!}{(n+2)_k}
C_k^{\frac {n+2}{2}}(\delta t) C_k^{\frac {n+2}{2}}(t)\, \rho^k.
\end{align*}
\end{lemma}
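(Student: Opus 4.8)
The plan is to differentiate a \emph{single} series for $F+G+H$ twice, term by term, and watch the product rule produce the three sums on the right-hand side. Unwinding the proof of Proposition~\ref{prop:keyreprn} by one step — stopping just before Lemma~\ref{lem:intbyterms} is invoked — gives, for $t=\cos\alpha\in[-1,1]$, the identity
\[
F(t)+G(t)+H(t)=\sum_{k=0}^{\infty}\frac{k!}{(n-2)_k}\,\rho^k\,C_k^{\frac{n-2}{2}}(t)\,A_k(t),
\]
where
\[
A_k(t):=\int_{-1}^{1}\bigl|\delta t-x\bigr|\,(1-x^2)^{\frac{n-3}{2}}\,C_k^{\frac{n-2}{2}}(x)\,dx ,
\]
the $k=0$ term being $F$, the $k=1$ term being $G$, and $\sum_{k\ge2}$ being $H$. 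Since $\rho<1$ is fixed and the Gegenbauer polynomials are polynomially bounded on $[-1,1]$, this series and all of its formally differentiated series converge locally uniformly, so I may differentiate twice under the summation sign. Writing $C_k:=C_k^{\frac{n-2}{2}}(t)$, the product rule gives $(C_kA_k)''=C_kA_k''+2C_k'A_k'+C_k''A_k$, and I claim the three resulting series are exactly the first, second, and third sums in the statement.

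The first branch is handled by a distributional collapse. Regarding $A_k$ as a function of $s=\delta t$ and using $\frac{d}{ds}\int_{-1}^{1}\operatorname{sgn}(s-x)g(x)\,dx=2g(s)$ for $s\in(-1,1)$, the second $s$-derivative of $|s-x|$ evaluates the integrand at $x=s$, so $A_k''(t)=2\delta^2(1-\delta^2t^2)^{\frac{n-3}{2}}C_k^{\frac{n-2}{2}}(\delta t)$ for every $k\ge0$; summing $\sum_k\frac{k!}{(n-2)_k}\rho^k C_kA_k''$ reproduces the first sum verbatim. For the remaining two branches I differentiate the Gegenbauer factor by \eqref{eqn:diffofgeng1}, obtaining $C_k'=(n-2)C_{k-1}^{\frac{n}{2}}(t)$ and $C_k''=n(n-2)C_{k-2}^{\frac{n+2}{2}}(t)$, and I use the closed form of $A_k$ furnished by Lemma~\ref{lem:intbyterms}. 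Differentiating that closed form once by means of \eqref{eqn:diffofgeng2} yields the clean expression
\[
A_k'(t)=-\frac{2(n-2)\delta}{k(k+n-2)}\,(1-\delta^2t^2)^{\frac{n-1}{2}}\,C_{k-1}^{\frac{n}{2}}(\delta t).
\]

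It then remains to substitute, reindex ($j=k-1$ in the $2C_k'A_k'$ branch and $j=k-2$ in the $C_k''A_k$ branch), and simplify the ratios of Pochhammer symbols — repeatedly collapsing products such as $(n-2)_k=(n-2)(n-1)(n)_{k-2}$ and $(n)_j(n+j)(n+j+1)=n(n+1)(n+2)_j$ — together with the relation $(n-2)\rho=n\delta$. The $2C_k'A_k'$ branch then collapses to $-\frac{4n\delta^2}{n-1}(1-\delta^2t^2)^{\frac{n-1}{2}}\sum_{j}\frac{j!}{(n)_j}C_j^{\frac n2}(\delta t)C_j^{\frac n2}(t)\rho^j$, which is the second sum, and the $C_k''A_k$ branch collapses to the third sum. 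The low-index terms are not covered by Lemma~\ref{lem:intbyterms}, but they cause no trouble: $C_k''$ vanishes for $k\le1$ and $C_k'$ vanishes for $k=0$, while the $k=1$ value of $A_k'$ agrees with the displayed formula by a one-line direct computation.

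I expect the difficulty to be organizational rather than conceptual. The two genuine ideas are the delta-type collapse in the $A_k''$ branch and the single use of \eqref{eqn:diffofgeng2} in the $A_k'$ branch; once these are in place, each branch maps onto exactly one target sum, so no cancellation across branches occurs. The main places to be careful are the rigorous justification of the termwise second differentiation of the series (and the evaluation of $\partial_s^2|s-x|$ at the interior point $s=\delta t$), and carrying the two simultaneous reindexings through the Pochhammer algebra so that the prefactors $2\delta^2$, $-\tfrac{4n\delta^2}{n-1}$, and $\tfrac{2n^3\delta^2}{(n+1)(n-1)(n-2)}$ emerge precisely.
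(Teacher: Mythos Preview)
Your proof is correct and follows essentially the same route as the paper's: both apply the product rule to a factorization of the general term, obtaining three branches that map one-to-one onto the three target sums via the differentiation formulas \eqref{eqn:diffofgeng1} and \eqref{eqn:diffofgeng2} together with Lemma~\ref{lem:intbyterms} and the relation $(n-2)\rho=n\delta$. The only organizational difference is that the paper computes $F''$ and $G''$ separately and applies the product rule to the post-Lemma~\ref{lem:intbyterms} form of $H$ (with the factor $(1-\delta^2t^2)^{(n+1)/2}C_{k-2}^{(n+2)/2}(\delta t)$ playing the role of your $A_k$, its second derivative coming from two applications of \eqref{eqn:diffofgeng2}), whereas you fold $F$, $G$, $H$ into a single pre-Lemma~\ref{lem:intbyterms} series and obtain $A_k''$ by the delta-collapse; the branch-by-branch Pochhammer algebra is otherwise identical.
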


\begin{proof}
An easy calculation gives
\[
F^{\prime\prime}(t)
~=~ 2 \delta^2 (1-\delta^2 t^2)^{\frac {n-3}{2}},
\]
or equivalently
\[
F^{\prime\prime}(t) ~=~ 2 \delta^2 (1-\delta^2 t^2)^{\frac {n-3}{2}}
C_0^{\frac {n-2}{2}}(\delta t) C_0^{\frac {n-2}{2}}(t), \label{eqn:2ndderofF}
\]
since $C_0^{\frac {n-2}{2}}(\delta t) = C_0^{\frac {n-2}{2}}(t) \equiv 1$.

Also, straightforward computations yield
\begin{align*}
G^{\prime\prime}(t)
~=~& 2 (n-2) \rho \delta^3 t^2 (1-\delta^2 t^2)^{\frac {n-3}{2}}
- \frac {4(n-2)}{n-1} \rho \delta  (1-\delta^2 t^2)^{\frac {n-1}{2}}.
\end{align*}
In view of that $C_1^{\frac {n-2}{2}}(\delta t) C_1^{\frac {n-2}{2}}(t) = (n-2)^2 \delta t^2$ and $(n-2)\rho = n\delta$,
we get
\begin{align*}
G^{\prime\prime}(t)
~=~& \frac {2}{n-2} \delta^2 (1-\delta^2 t^2)^{\frac {n-3}{2}}
C_1^{\frac {n-2}{2}}(\delta t) C_1^{\frac {n-2}{2}}(t)\, \rho \\
&\quad - \frac {4n}{n-1} \delta^2 (1-\delta^2 t^2)^{\frac {n-1}{2}} C_0^{\frac {n}{2}}(\delta t) C_0^{\frac {n}{2}}(t).
\end{align*}

Now, what is left is to show that
\begin{align}\label{eqn:2ndderofH}
H^{\prime\prime}(t)
&= 2 \delta^2 (1-\delta^2 t^2)^{\frac {n-3}{2}} \sum_{k=2}^{\infty} \frac {k!}{(n-2)_k}
C_k^{\frac {n-2}{2}}(\delta t) C_k^{\frac {n-2}{2}}(t)\, \rho^k \\
&\qquad - \frac {4n \delta^2}{n-1} (1-\delta^2 t^2)^{\frac {n-1}{2}} \sum_{k=1}^{\infty} \frac {k!}{(n)_k}
C_k^{\frac {n}{2}}(\delta t) C_k^{\frac {n}{2}}(t)\, \rho^k \notag \\
&\qquad + \frac {2n^3 \delta^2}{(n+1)(n-1)(n-2)} (1-\delta^2 t^2)^{\frac {n+1}{2}} \sum_{k=0}^{\infty} \frac {k!}{(n+2)_k}
C_k^{\frac {n+2}{2}}(\delta t) C_k^{\frac {n+2}{2}}(t)\, \rho^k. \notag
\end{align}

To this end, we differentiate \eqref{eqn:functionH} twice to obtain
\[
H^{\prime\prime}(t)  = H_1(t) + H_2(t) + H_3(t),
\]
where
\begin{align*}
H_1(t) ~:=~& \frac {2}{n^2-1}  \sum_{k=2}^{\infty} \frac {(k-2)!}{(n+2)_{k-2}}
\frac {d^2}{dt^2} \left\{(1-\delta^2 t^2)^{\frac {n+1}{2}} C_{k-2}^{\frac {n+2}{2}}(\delta t)\right\}
C_{k}^{\frac {n-2}{2}}(t)\, \rho^{k},\\
H_2(t) ~:=~& \frac {4}{n^2-1}  \sum_{k=2}^{\infty} \frac {(k-2)!}{(n+2)_{k-2}}
 \frac {d}{dt} \left\{(1-\delta^2 t^2)^{\frac {n+1}{2}} C_{k-2}^{\frac {n+2}{2}}(\delta t)\right\} \frac {d}{dt}
\left\{C_{k}^{\frac {n-2}{2}}(t)\right\}\, \rho^{k},\\
H_3(t) ~:=~& \frac {2}{n^2-1}  \sum_{k=2}^{\infty} \frac {(k-2)!}{(n+2)_{k-2}}
(1-\delta^2 t^2)^{\frac {n+1}{2}} C_{k-2}^{\frac {n+2}{2}}(\delta t) \frac {d^2}{dt^2} \left\{C_{k}^{\frac {n-2}{2}}(t)\right\}\, \rho^{k}.
\end{align*}
Repeated application of \eqref{eqn:diffofgeng2} yields that
\begin{align}\label{eqn:H1}
H_1(t) ~=~& \frac {2}{n^2-1}  \sum_{k=2}^{\infty} \frac {(k-2)!}{(n+2)_{k-2}}
\frac {(k-1)(k+n-1)}{n} \frac {k(k+n)}{n-2} \\
&\hspace{72pt} \times  \delta^2 (1-\delta^2 t^2)^{\frac {n-3}{2}} C_{k}^{\frac {n-2}{2}}(\delta t)
C_{k}^{\frac {n-2}{2}}(t)\, \rho^{k} \notag\\
=~& 2 \delta^2 (1-\delta^2 t^2)^{\frac {n-3}{2}} \sum_{k=2}^{\infty} \frac {(k)!}{(n-2)_{k}}
C_{k}^{\frac {n-2}{2}}(\delta t) C_{k}^{\frac {n-2}{2}}(t)\, \rho^{k}. \notag
\end{align}
Also, using \eqref{eqn:diffofgeng2} and \eqref{eqn:diffofgeng1} we obtain
\[
H_2(t) ~=~ - \frac {4(n-2)}{n-1}  \delta\, (1-\delta^2 t^2)^{\frac {n-1}{2}} \sum_{k=2}^{\infty} \frac {(k-1)!}{(n)_{k-1}}
C_{k-1}^{\frac {n}{2}}(\delta t) C_{k-1}^{\frac {n}{2}}(t)\, \rho^{k}.
\]
By changing the summation index from $k$ to $k+1$ and recalling that $\frac {n-2}{n}\rho = \delta$, we get
\begin{equation}\label{eqn:H2}
H_2(t) ~=~ - \frac {4n}{n-1}  \delta^2 (1-\delta^2 t^2)^{\frac {n-1}{2}} \sum_{k=1}^{\infty} \frac {k!}{(n)_{k}}
C_{k}^{\frac {n}{2}}(\delta t) C_{k}^{\frac {n}{2}}(t)\, \rho^{k}.
\end{equation}
In the similar way we obtain
\begin{align}\label{eqn:H3}
H_3(t) ~=~&  \frac {2n(n-2)}{n^2-1}  (1-\delta^2 t^2)^{\frac {n+1}{2}} \sum_{k=2}^{\infty} \frac {(k-2)!}{(n+2)_{k-2}}
C_{k-2}^{\frac {n+2}{2}}(\delta t) C_{k-2}^{\frac {n+2}{2}}(t)\, \rho^{k}\\
=~& \frac {2n^3}{(n-2)(n-1)(n+1)} \delta^2 (1-\delta^2 t^2)^{\frac {n+1}{2}} \sum_{k=0}^{\infty} \frac {k!}{(n+2)_{k}}
C_{k}^{\frac {n+2}{2}}(\delta t) C_{k}^{\frac {n+2}{2}}(t)\, \rho^{k}. \notag
\end{align}
Summing up \eqref{eqn:H1}, \eqref{eqn:H2} and \eqref{eqn:H3} leads to the desired equality \eqref{eqn:2ndderofH},
and the proof of the lemma is complete.
\end{proof}

We are now turning to the proof of Theorem \ref{thm:reformlmain}.

\begin{proof}[Proof of Theorem \ref{thm:reformlmain}]
It follows from Lemmas \ref{lem:2ndder} and \ref{lem:Gegenbauer1874} that
\begin{align*}
F^{\prime\prime}(t)  + G^{\prime\prime}(t) + H^{\prime\prime}(t)  =~& 2 \delta^2 (1-\delta^2 t^2)^{\frac {n-3}{2}} \sum_{k=0}^{\infty} \rho^k
\int\limits_{-1}^1 C_k^{\frac {n-2}{2}}(z) K_{\frac {n-2}{2}}(\delta t,t,z) dz\\
&\quad - \frac {4n \delta^2}{n-1} (1-\delta^2 t^2)^{\frac {n-1}{2}} \sum_{k=0}^{\infty} \rho^k
\int\limits_{-1}^1 C_k^{\frac {n}{2}}(z) K_{\frac {n}{2}}(\delta t,t,z) dz \\
&\quad + \frac {2n^3 \delta^2}{(n+1)(n-1)(n-2)} (1-\delta^2 t^2)^{\frac {n+1}{2}} \\
& \qquad \qquad \times \sum_{k=0}^{\infty} \rho^k
\int\limits_{-1}^1 C_k^{\frac {n+2}{2}}(z) K_{\frac {n+2}{2}}(\delta t,t,z) dz.
\end{align*}
Combining with the generating relation \eqref{eqn:generatingformula}, this yields
\begin{align}\label{eqn:finalstep}
F^{\prime\prime}&(t)  + G^{\prime\prime}(t) + H^{\prime\prime}(t) \\
&~=~ 2 \delta^2 (1-\delta^2 t^2)^{\frac {n-3}{2}}
\int\limits_{-1}^1 (1-2\rho z+ \rho^2)^{-\frac {n-2}{2}} K_{\frac {n-2}{2}}(\delta t,t,z) dz \notag\\
&\qquad - \frac {4n \delta^2}{n-1} (1-\delta^2 t^2)^{\frac {n-1}{2}}
\int\limits_{-1}^1 (1-2\rho z+ \rho^2)^{-\frac {n}{2}} K_{\frac {n}{2}}(\delta t,t,z) dz \notag \\
&\qquad + \frac {2n^3 \delta^2}{(n+1)(n-1)(n-2)} (1-\delta^2 t^2)^{\frac {n+1}{2}} \notag \\
& \hspace{72pt} \times
\int\limits_{-1}^1 (1-2\rho z+ \rho^2)^{-\frac {n+2}{2}} K_{\frac {n+2}{2}}(\delta t,t,z) dz. \notag
\end{align}
We shall show that $F^{\prime\prime}(t)  + G^{\prime\prime}(t) + H^{\prime\prime}(t) \geq 0$ for all $t\in (-1,1)$.
In view of \eqref{eqn:finalstep}, it suffices to show that the function
\begin{align*}
L(t,z) ~:=~& 2 (1-\delta^2 t^2)^{\frac {n-3}{2}}
(1-2\rho z+ \rho^2)^{-\frac {n-2}{2}} \widetilde{K}_{\frac {n-2}{2}}(\delta t,t,z) \\
&\quad - \frac {4n}{n-1} (1-\delta^2 t^2)^{\frac {n-1}{2}}
 (1-2\rho z+ \rho^2)^{-\frac {n}{2}} \widetilde{K}_{\frac {n}{2}}(\delta t,t,z) \\
&\quad  + \frac {2n^3}{(n+1)(n-1)(n-2)} (1-\delta^2 t^2)^{\frac {n+1}{2}} \\
& \hspace{64pt} \times
 (1-2\rho z+ \rho^2)^{-\frac {n+2}{2}} \widetilde{K}_{\frac {n+2}{2}}(\delta t,t,z)
\end{align*}
is nonnegative on the region
\[
\Omega := \{(t,z): -1<t,z<1 \text{ and }  1-\delta^2t^2 -t^2-z^2+2\delta t^2z>0\}.
\]
But it is easy to check that
\begin{align*}
L(t,z) ~=~&\frac {2\Gamma(\frac {n-1}{2})} {\Gamma(\frac {n-2}{2}) \Gamma(\frac {1}{2})}
\frac {(1-\delta^2 t^2 -t^2-z^2+2\delta t^2 z)^{\frac {n-4}{2}} }
{(1-2\rho z+ \rho^2)^{\frac {n+2}{2}} (1-t^2)^{\frac {n+1}{2}}} \\
& \qquad \times \biggl\{ (1-2\rho z+\rho^2)^2 (1-t^2)^2 \\
& \qquad \qquad - \frac {2n}{n-2} (1-2\rho z +\rho^2) (1-t^2) (1-\delta^2t^2 -t^2-z^2+2\delta t^2z)\\
& \qquad\qquad  + \frac {n^2}{(n-2)^2} (1-\delta^2t^2 -t^2-z^2+2\delta t^2z)^2\biggr\},
\end{align*}
which is obviously nonnegative on $\Omega$. We have thus proved the theorem.
\end{proof}
%

\end{document}